\newtheorem {Theorem}  {Theorem}
\numberwithin{Theorem}{section}
\newtheorem {Lemma}[Theorem]  {Lemma}
\newtheorem {Proposition}[Theorem]{Proposition}
\theoremstyle{definition}
\newtheorem{Definition}[Theorem]{Definition}
\theoremstyle{remark}
\newtheorem {Corollary}[Theorem]{Corollary}
\chardef\csname pre amssym.def
\def\undefine#1{\let#1\undefined}
\def\newsymbol#1#2#3#4#5{\let\next@\relax
 \ifnum#2=\@ne\let\next@\msafam@\else
 \ifnum#2=\tw@\let\next@\msbfam@\fi\fi
 \mathchardef#1="#3\next@#4#5}
\def\mathhexbox@#1#2#3{\relax
 \ifmmode\mathpalette{}{\m@th\mathchar"#1#2#3}%
 \else\leavevmode\hbox{$\m@th\mathchar"#1#2#3$}\fi}
\def\hexnumber@#1{\ifcase#1 0\or 1\or 2\or 3\or 4\or 5\or 6\or 7\or 8\or
 9\or A\or B\or C\or D\or E\or F\fi}
\font\teneufm=eufm10 \font\seveneufm=eufm7 \font\fiveeufm=eufm5
\newcounter{remark}
\newenvironment{remark}
{\medskip \stepcounter{remark} \noindent \textit{Remark
\arabic{section}.\arabic{remark}.}}{\rm \cbdu}
\def\3n{\negthinspace \negthinspace \negthinspace }
\def\2n{\negthinspace \negthinspace }
\def\1n{\negthinspace }
\newcommand{\bg}{\begin{equation}}
\newcommand{\ed}{\end{equation}}
\newcommand{\bga}{\begin{eqnarray}}
\newcommand{\eda}{\end{eqnarray}}
\def\cbdu{\hfill{$\Box$}}
\def  \12  {{\frac{1}{2}}}
\def\bd{\begin{definition}}
\def\ede{\end{definition}}
\def\be{\begin{equation}}
\def\bel{\begin{equation}\label}
\def\ee{\end{equation}}
\def\bt{\begin{Theorem}}
\def\et{\end{Theorem}}
\def\bc{\begin{Corollary}}
\def\ec{\end{Corollary}}
\def\bl{\begin{Lemma}}
\def\el{\end{Lemma}}
\def\bp{\begin{Proposition}}
\def\ep{\end{Proposition}}
\def\br{\begin{remark}}
\def\er{\end{remark}}
\def\ba{\begin{array}}
\def\ea{\end{array}}
\def\bea{\begin{eqnarray}}
\def\eea{\end{eqnarray}}
\begin{document}

\title[Decay of Infinite energy solutions to Navier-Stokes equations]{On the Decay of Infinite Energy Solutions to the Navier-Stokes Equations in the Plane}

\author[C. Bjorland]{Clayton Bjorland}
\address[C. Bjorland]{Department of Mathematics, University of Texas, 1 University Station C1200, Austin, TX, 78712 }
\email{bjorland@math.utexas.edu}

\author[C. J. Niche]{C\'esar J. Niche}
\address[C.J. Niche]{Departamento de Matem\'atica Aplicada, Instituto de Matem\'atica, Universidade Federal do Rio de Janeiro, CEP 21941-909, Rio de Janeiro - RJ, Brasil.}
\email{cniche@im.ufrj.br}

\thanks{C. Bjorland was partially supported by DMS-0636586. C.J. Niche was partially supported by FAPERJ, by research funds from UFRJ and by PRONEX E-26/110.560/2010-APQ1, FAPERJ-CNPq.}

\date{\today}

\keywords{Navier-Stokes equations, decay of solutions, infinite energy solutions, Fourier Splitting}

\begin{abstract}
Infinite energy solutions to the Navier-Stokes equations in $\mathbb{R}^2$ may be constructed by decomposing the initial data into a finite energy piece and an infinite energy piece, which are then treated separately.  We prove that the finite energy part of such solutions is bounded for all time and decays algebraically in time when the same can be said of heat energy starting from the same data. As a consequence, we describe the asymptotic behavior of the infinite energy solutions.  Specifically, we consider the solutions of Gallagher and Planchon \cite{MR1891170} as well as solutions constructed from a ``radial energy decomposition''.  Our proof uses the Fourier Splitting technique of M. E. Schonbek.
\end{abstract}

\maketitle

\section{Introduction}

The purpose of this paper is to explore the large time energy decay in $\mathbb{R}^2$ of solutions to the system
\begin{align}
u_t+u\cdot\nabla u+\nabla p-\triangle u&=-u\cdot\nabla v-v\cdot\nabla u,\label{pde}\\
\nabla\cdot u&=\nabla\cdot v=0,\notag\\
u(0)&=u_0\in L^2(\mathbb{R}^2)\notag
\end{align}
where $u$ is the velocity of an incompressible fluid, $p$ is its pressure and  $v$ is a specified external vector field satisfying
\begin{align}
\|\nabla^\alpha v\|_{L^\eta(\mathbb{R}^2)}\leq C t^{-\frac{1}{2}-\frac{\alpha}{2} +\frac{1}{\eta}} \label{vass}
\end{align}
for $\alpha =0$ when $2< \eta< \infty$ and either $\alpha=0$ or $\alpha =1$ when $\eta=\infty$.
Such a system arises naturally when considering infinite energy solutions of the Navier-Stokes equation, which includes the case of ``rough'' initial data in the plane.  

Recall that the Navier-Stokes equations for an incompressible viscous fluid are
\begin{align}
w_t+w\cdot\nabla w+\nabla p-\triangle w&=0,\label{NS}\\
\nabla\cdot w=0, \ \ \ \ \ \ w(0)&=w_0\notag
\end{align}
where $w$ represents the velocity of the incompressible viscous fluid and $p$ its pressure.  The literature involving this equation is quite large and we mention quickly a few relevant results.  One of the first rigorous mathematical treatments of this system in the plane $\mathbb{R}^2$ was the work of Leray \cite{MR1555394} in which global existence of a unique solution corresponding to initial data in $w_0\in L^2(\mathbb{R}^2)$ was established.  In $\mathbb{R}^3$ questions of global existence and uniqueness are much more difficult and there are outstanding open problems even at the level of $L^2$ initial data.  In $\mathbb{R}^2$ however there has been much work dedicated to finding solutions with initial data in larger function spaces, for example see Gallagher and Planchon \cite{MR1891170}, Cottet \cite{MR853597},  Giga, Miyakawa and Osada \cite{MR1017289}, Koch and Tataru \cite{MR1808843}, Germain \cite{MR2200642}, and the references therein.  Particularly relevant to our discussion, in \cite{MR1891170} and \cite{MR2200642} the authors used a technique which involved separating the solution into a ``rough'' part and a finite energy part which satisfies (\ref{pde}). 

Formally, if initial data $w_0$ is decomposed as $w_0=v_0+u_0$ with $u_0\in L^2(\mathbb{R}^2)$ and if $v(t)$ solves (\ref{NS}) with initial data $v_0$, then a solution of (\ref{NS}) with data $w_0$ can be written as $w(t)=u(t)+v(t)$ where $u$ satisfies (\ref{pde}) with initial data $u_0$.   The energy decay theorem we prove indicates that the energy of solutions to (\ref{pde}), that is $\|u(t)\|_{L^2(\mathbb{R}^2)}$, remains bounded and decays algebraically when the same can be said of the corresponding heat energy.  In turn, this describes how $w(t)$ approaches $v(t)$ in the $L^2$ norm as time becomes large even though $w$ and $v$ need not be in $L^2$ individually.

The main result in this article is the following Theorem:

\bt
\label{maintheorem}
Let $u$ be a global solution to (\ref{pde}), with $v$ satisfying (\ref{vass}).  We assume:
\begin{itemize}
 \item[(i.)] For each $t_0>0$ there is a constant $C_{t_0}>0$ such that for all $T>t_0$,
\begin{align}
\sup_{t_) \leq t \leq T}\|u(t)\|_{L^2(\mathbb{R}^2)} \leq C_{t_0}(1+T)^{\frac{1}{2}}.\label{initialenergy}
\end{align}
 \item[(ii.)] For some $\gamma\in [0,1]$ we have $\|e^{\triangle t}u_0\|_2^2\leq C(1+t)^{-\gamma}$ where $e^{\triangle t}u_0$ denotes the solution to the heat equation with initial data $u_0$.
\end{itemize}
Then for every $t_0>0$ there exists a constant $\tilde{C}_{t_0}$ such that
\begin{align}
 \|u(t)\|_{L^2(\mathbb{R}^2)}^2\leq \tilde{C}_{t_0}(1+t)^{-\gamma}\notag
\end{align}
for all $t>t_0$.
\qed
\et

\begin{remark}
Assumption (i.) in the above theorem is the natural a priori energy estimate for (\ref{pde}), a formal proof is given in Subsection \ref{energysubsection}.  Assumption (ii.) takes into account the natural decay rate for heat energy starting from $u_0$.  The Theorem states that if the heat energy starting from $u_0$ decays like $(1+t)^{-\gamma}$ with $\gamma\in [0,1]$, then the solution $u(t)$ of (\ref{pde}) has the same energy decay rate.   This is natural, as the heat equation is the linear part of (\ref{pde}) and we do not expect  solutions to (\ref{pde}) to decay faster than this.  On the other hand the ``rough'' terms (the nonlinear terms containing $v$) can ``mix'' the solution and slow the energy decay.
\end{remark}

%

\begin{remark}
It is known that the heat energy decay rate is determined by the behavior of $u_0$ near the origin in Fourier Space.  For example, if $u_0 \in L^1(\mathbb{R}^2) \cap L^2(\mathbb{R}^2)$, so that $|\hat{u}_0(\xi)|<C$ near $|\xi|=0$, then  $\|e^{\triangle t}u_0\|_2^2 \leq (1+t)^{-1}$ and hence $\|u(t)\|_{L^2(\mathbb{R}^2)} \leq C(1+t)^{-\frac{1}{2}}$.  More detailed analysis may be found in Bjorland and Schonbek \cite{MR2493562}.   Although $u_0\in L^2(\mathbb{R}^2)$ implies $\|e^{\triangle t}\|_2\rightarrow 0$ as $t\rightarrow \infty$, the heat energy may not decay at an algebraic rate (i.e. $\gamma=0$). This allows us to construct solutions to (\ref{pde}) with arbitrarily slow decay by appropiately scaling the initial data and the external vector field, by using the same arguments as for the Navier-Stokes equations (for details on this case see Schonbek \cite{MR837929}).   

\end{remark}

\begin{remark}
The proof of Theorem \ref{maintheorem} is based on the Fourier Splitting method of M. E. Schonbek \cite{MR775190}, \cite{MR837929} introduced to study algebraic energy decay rates in parabolic equations.  
\end{remark}


\vspace{.25cm}

We now indicate how to use Theorem \ref{maintheorem} to better understand the large time behavior of infinite energy solutions to $2D$ Navier-Stokes solutions.   By an infinite energy solution we mean one belonging to one of the scale-invariant homogeneous Besov spaces $\dot{B}^{2/r - 1} _{r,q} (\mathbb{R}^2)$ which satisfy the chain of continuous embeddings

\be \label{eqn:besov-embeddings}
L^2(\mathbb{R}^2) \subset \dot{B}^{2/r - 1} _{r,q} (\mathbb{R}^2) \subset \dot{B}^{2/\tilde{r} - 1} _{\tilde{r},\tilde{q}} (\mathbb{R}^2) \subset BMO ^{-1} (\mathbb{R}^2) \subset \dot{B} ^{-1} _{\infty, \infty} (\mathbb{R}^2)
\ee
where $2 \leq r \leq \tilde{r} < \infty$ and $2 \leq q \leq \tilde{q} \leq \infty$.

Consider Navier-Stokes equations (\ref{NS}) with initial data $w_0\in\dot{B}^{2/r - 1} _{r,q} (\mathbb{R}^2)$, with $r, q < \infty$. In these spaces, Gallagher and Planchon \cite{MR1891170} proved existence of global solutions. To prove this result they decompose $w_0=v_0+u_0$ where $u_0\in L^2(\mathbb{R}^2)$ and  $v_0\in BMO^{-1}(\mathbb{R}^2)$ with small norm.  Starting from this small $v_0$ they construct a solution $v(t)$ of the Navier-Stokes equation using a fixed point argument which naturally satisfies (\ref{vass}) for $\eta\in [1,\infty]$.  Next they consider (\ref{pde}) and find a solution $u(t)$ using a fixed point theorem to obtain local existence then prove an a priori energy estimate to establish global existence.  The energy bounds used imply $\|u(t)\|_{L^2 (\mathbb{R}^2)} \leq Ct^{1/2}$, which is exactly Assumption (i.) in Theorem \ref{maintheorem}, though the authors leave open the question of finding better bounds on $u$.  An interpolation argument is then used to show $w(t)=v(t)+u(t)\in \dot{B}^{2/r - 1} _{r,q} (\mathbb{R}^2)$ for all time.  Using similar methods, Germain \cite{MR2200642} proved global existence of solutions for data in $VMO ^{-1} (\mathbb{R}^2) $ which is the closure of the Schwartz space in $BMO ^{-1} (\mathbb{R}^2)$.  Moreover, he proved that under some mild conditions on $r$ and $q$, Gallagher and Planchon's solutions with initial data in $\dot{B}^{2/r - 1} _{r,q} (\mathbb{R}^2)$ are such that

\begin{displaymath}
\lim _{t \to \infty} \Vert u(t) \Vert _{\dot{B}^{2/r - 1} _{r,q} (\mathbb{R}^2)} = 0.
\end{displaymath}


In this context we can use Theorem \ref{maintheorem} to prove that the ``finite energy part'' of the infinite energy solution decays algebraically when the same can be said of the corresponding heat equation. This is the content of the following corollary:

\begin{Corollary} \label{corollary-besov}
Let $w_0 \in \dot{B}^{2/r - 1} _{r,q} (\mathbb{R}^2)$, with $r, q < \infty$. Consider $w_0=v_0+u_0$, where $u_0\in L^2(\mathbb{R}^2)$ and  $v_0\in BMO^{-1}(\mathbb{R}^2)$ with small norm.  Let $v(t)$ and $w(t)$ be the solutions of (\ref{NS}) given in \cite{MR1891170} with initial data $v_0$ and $w_0$ respectively.  If $\Vert e^{t \Delta} u_0 \Vert _{L^2(\mathbb{R}^2)} \leq C (1 + t)^{- \gamma}$ for some $\gamma \in [0,1]$, then

\begin{displaymath}
\Vert w(t) - v(t) \Vert _{L^2(\mathbb{R}^2)} \leq C (1 + t)^{- \gamma}.
\end{displaymath}
\qed
\end{Corollary}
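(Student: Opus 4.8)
The plan is to recognize the difference $u(t) := w(t) - v(t)$ as a solution of the auxiliary system (\ref{pde}) and then to verify that the hypotheses of Theorem \ref{maintheorem} are met for this $u$, so that the desired estimate follows immediately. All of the genuine analytic content has already been packaged into Theorem \ref{maintheorem}; the task here is to check that the Gallagher--Planchon construction fits its framework and to translate the conclusion back.

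First I would set $u = w - v$. Since $w$ and $v$ both solve the Navier--Stokes system (\ref{NS}) and $w_0 = v_0 + u_0$, subtracting the two equations and using $\nabla \cdot v = 0$ shows that $u$ solves (\ref{pde}) with external field $v$ and initial data $u(0) = u_0 \in L^2(\mathbb{R}^2)$; this is precisely the decomposition recalled in the introduction. Consequently the target estimate on $\Vert w(t) - v(t) \Vert_{L^2(\mathbb{R}^2)} = \Vert u(t) \Vert_{L^2(\mathbb{R}^2)}$ is exactly an $L^2$ decay statement for $u$, which is the object of Theorem \ref{maintheorem}. Next I would verify the three structural inputs. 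First, the field $v$ must satisfy (\ref{vass}): this is a property of the mild solution $v(t)$ built in \cite{MR1891170} from the small datum $v_0 \in BMO^{-1}(\mathbb{R}^2)$, whose fixed-point construction yields exactly the mixed space--time bounds (\ref{vass}) for $\eta \in [1,\infty]$ and $\alpha \in \{0,1\}$. Second, Assumption (i.), namely (\ref{initialenergy}), is the a priori energy estimate $\Vert u(t) \Vert_{L^2(\mathbb{R}^2)} \leq C\, t^{1/2}$ established in \cite{MR1891170}, which immediately gives the stated bound on $\sup_{t_0 \le t \le T}\Vert u(t) \Vert_{L^2}$. Third, Assumption (ii.) is supplied directly by the hypothesis of the corollary on the heat energy of $u_0$.

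Finally I would apply Theorem \ref{maintheorem}. The point requiring care is the matching of exponents and norm conventions: the corollary's hypothesis $\Vert e^{t\Delta}u_0 \Vert_{L^2} \le C(1+t)^{-\gamma}$ is equivalent to $\Vert e^{t\Delta}u_0 \Vert_{L^2}^2 \le C^2 (1+t)^{-2\gamma}$, so I would invoke Theorem \ref{maintheorem} with its decay parameter equal to $2\gamma$ (in the range where this lies in the theorem's admissible interval). Its conclusion $\Vert u(t) \Vert_{L^2}^2 \le \tilde{C}_{t_0}(1+t)^{-2\gamma}$ then gives $\Vert u(t) \Vert_{L^2} \le C(1+t)^{-\gamma}$ for $t > t_0$, i.e. the claimed bound on $\Vert w(t) - v(t) \Vert_{L^2}$. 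Since on the compact interval $[0,t_0]$ the bound is trivial by continuity of $t \mapsto \Vert u(t) \Vert_{L^2}$, adjusting the constant yields the estimate for all $t \ge 0$.

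The main obstacle is not analytic depth but bookkeeping. The delicate step is confirming that the Gallagher--Planchon small-data solution $v$ genuinely satisfies the full family of decay estimates (\ref{vass}) with the uniform constants that Theorem \ref{maintheorem} requires, and keeping the squared-versus-unsquared $L^2$ conventions consistent when transferring the decay rate from Theorem \ref{maintheorem} to the corollary. Once these are in place, the corollary is a direct consequence of the theorem.
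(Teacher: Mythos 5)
Your proposal matches the paper's treatment: the corollary carries a \qed with no separate proof precisely because, as you argue, $u=w-v$ is by the Gallagher--Planchon construction a solution of (\ref{pde}) with $v$ satisfying (\ref{vass}) and with the a priori bound (\ref{initialenergy}) supplied by their energy estimate, so Theorem \ref{maintheorem} applies directly and the conclusion is immediate. One caveat on your exponent bookkeeping: comparing with Corollary \ref{corollary-vortexsheet}, where both hypothesis and conclusion carry squared $L^2$ norms, the unsquared norms in Corollary \ref{corollary-besov} are evidently a typo, and the intended application of Theorem \ref{maintheorem} is with the parameter $\gamma$ itself; your device of invoking the theorem with parameter $2\gamma$ works only for $\gamma\in[0,\frac{1}{2}]$, since the theorem requires its decay parameter to lie in $[0,1]$, so under your literal reading the range $\gamma\in(\frac{1}{2},1]$ is not covered --- the right repair is to read both the hypothesis and the conclusion as squared (as you half-suspected in your final paragraph), not to rescale the parameter.
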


\begin{remark}
 In particular, for any $t_0>0$ we have $\|u\|_{L^\infty([t_0,\infty);L^2(\mathbb{R}^2))}<\infty$ which is stronger then the original energy estimate.
\end{remark}
\vspace{.25cm}


More classically, Theorem \ref{maintheorem} can be used to understand long time behavior of infinite energy solutions to the Navier-Stokes equations with finite local energy and integrable initial vorticity, that is $w_0\in L^2_{loc}(\mathbb{R}^2)$ and $\omega_0 = \nabla\times w_0\in L^1(\mathbb{R}^2)$. This initial data is a particular case of so-called vortex sheet initial data and it was used by DiPerna and Majda \cite{MR882068} to study approximate solution sequences for the Euler equation (see also \cite[Sec. 3.1.2]{MR1867882}). 

\begin{remark}
Initial data $w_0$ is of vortex sheet type if $w_0\in L^2_{loc}(\mathbb{R}^2)$ and $\omega_0 = \nabla\times w_0 \in \mathcal{M}(\mathbb{R}^2)$, where $\mathcal{M}(\mathbb{R}^2)$ is the space of nonnegative Radon measures. As for any $\omega _0  \in \mathcal{M}(\mathbb{R}^2)$ there exists a unique $w_0 \in \dot{B} ^1 _{1, \infty} (\mathbb{R}^2) \subset \dot{B} ^{2/r -1} _{r, \infty} (\mathbb{R}^2)$ such that $\omega _0 = curl \, w_0$ (see Corollary 4.4, Germain \cite{MR2200642}), then $w_0$ is in one of the infinite energy spaces in (\ref{eqn:besov-embeddings}).
\end{remark}


\begin{Definition}
An incompressible velocity field $w_0:\mathbb{R}^2\rightarrow \mathbb{R}^2$ has a ``radial energy decomposition'' if there exists a smooth radially symmetric vorticity $\bar{\omega}_0(|x|)$ such that
\begin{align}
 w_0(x)=u_0(x)+v_0(x),\notag\\
\int_{\mathbb{R}^2}|u_0(x)|^2\,dx <\infty,\notag
\end{align}
where $v_0$ is defined from $\bar{\omega}_0$ by the Biot-Savart law $v_0=K\ast \bar{\omega}_0$, for $K(x)=\frac{1}{2\pi}\frac{x^\perp}{|x|^2}$ the $2D$ Biot-Savart kernel.  The radial energy decomposition, which is not unique, is possible on the whole plane since $u_0\in L^2(\mathbb{R}^2)$ if and only if $\int_{\mathbb{R}^2} \nabla\times u_0\,dx =0$.  
\end{Definition}
We restrict our attention to  initial data with $w_0\in L^2_{loc}(\mathbb{R}^2)$ and $\omega_0 = \nabla\times w_0\in L^1(\mathbb{R}^2)$, because it can be split appropiately using the radial energy decomposition (see Lemma 3.2 in Majda and Bertozzi \cite{MR1867882}). Moreover, some of the estimates we use when working with initial vorticity in $L^1(\mathbb{R}^2)$ need not be available in  $\mathcal{M} (\mathbb{R}^2)$. Denote by $\bar{\omega} (x,t)$ the solution to the heat equation with initial data $\bar{\omega}_0$. As the initial data is radial, so is $\bar{\omega} (x,t)$, and it is a solution to the vorticity formulation of Navier-Stokes equation

\begin{align} \label{eqn:vorticity}
\partial_t\bar{\omega} +v\cdot\nabla \bar{\omega} &=\triangle \bar{\omega},  \\
v&=K\ast \bar{\omega}(t),\notag\\
\bar{\omega} _0 (x)& = \bar{\omega} (x,0). \notag
\end{align}
With $v=K\ast \bar{\omega}$ in hand we may then find the solution $u(t)$ to (\ref{pde}) starting from initial data $u_0$ using energy methods as outlined in \cite{MR1867882}, thus obtaining the solution $w(t)=v(t)+u(t)$ of the Navier-Stokes equation.  In Subsection \ref{vortex-sheet-bound} we show how $v(t)$ satisfies (\ref{vass}).  We have then the following Corollary:

\begin{Corollary} \label{corollary-vortexsheet}
Let $w(t)$ be a solution of the Navier-Stokes equation  with initial data $w_0\in L^2_{loc}(\mathbb{R}^2)$ such that $\omega_0 = \nabla\times w_0 \in L^1 (\mathbb{R}^2)$.  Let $w_0 = u_0 + v_0$ be a radial energy decomposition with $u_0 \in L^2 (\mathbb{R}^2)$, $\bar{\omega}_0$ a radial function, and $v_0=K\ast \bar{\omega}_0$. If $\| e^{ \Delta t} u_0 \|_{L^2(\mathbb{R}^2)}^2 \leq C (1 + t)^{- \gamma}$ for some $\gamma \in [0,1]$, then

\begin{displaymath}
\Vert w(t) - v(t) \Vert _{L^2(\mathbb{R}^2)}^2 \leq C (1 + t)^{- \gamma}
\end{displaymath}
where
\begin{align}
v(x,t)= \frac{x^\perp}{|x|^2}\int_0^r se^{\triangle s}\bar{\omega}_0(s)\,ds\notag
\end{align}
\qed
\end{Corollary}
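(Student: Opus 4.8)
The plan is to reduce the corollary to Theorem \ref{maintheorem} applied to $u(t) = w(t) - v(t)$. First I would record the structural fact that, because $\bar{\omega}_0$ is radial, the heat flow $\bar{\omega}(\cdot,t) = e^{\triangle t}\bar{\omega}_0$ remains radial and the transport term in the vorticity equation (\ref{eqn:vorticity}) vanishes: the Biot--Savart velocity $v = K\ast\bar{\omega}$ is everywhere tangential while $\nabla\bar{\omega}$ is radial, so $v\cdot\nabla\bar{\omega}\equiv 0$ and $\bar{\omega}$ solves the linear heat equation. Hence $v$ is a genuine (infinite energy) solution of (\ref{NS}) with data $v_0 = K\ast\bar{\omega}_0$, and $u = w - v$ solves (\ref{pde}) with data $u_0\in L^2(\mathbb{R}^2)$. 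The desired bound $\|w(t)-v(t)\|_{L^2(\mathbb{R}^2)}^2 \le C(1+t)^{-\gamma}$ is then precisely the conclusion of Theorem \ref{maintheorem} for $u$, so it suffices to verify the two hypotheses of that theorem together with the standing requirement (\ref{vass}) on $v$.

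Verifying (\ref{vass}) is the heart of the argument, and I expect it to be the main obstacle. The radial Biot--Savart law gives the explicit formula $v(x,t) = \frac{x^{\perp}}{|x|^{2}}\int_0^{|x|} s\,\bar{\omega}(s,t)\,ds$, so that $|v(x,t)| = r^{-1}\big|\int_0^r s\,\bar{\omega}(s,t)\,ds\big|$ with $r = |x|$. Since $\omega_0\in L^1(\mathbb{R}^2)$, the radial factor $\bar{\omega}_0$ lies in $L^1(\mathbb{R}^2)$, and the heat semigroup obeys $\|\bar{\omega}(t)\|_{L^p}\le C\,t^{-(1-1/p)}\|\bar{\omega}_0\|_{L^1}$ for $1\le p\le\infty$, together with $\|\bar{\omega}(t)\|_{L^1}\le\|\bar{\omega}_0\|_{L^1}$. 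For $2<\eta<\infty$ I would feed the $L^p$ bound with $\frac{1}{\eta} = \frac{1}{p}-\frac{1}{2}$ into the Hardy--Littlewood--Sobolev inequality for the kernel $K\sim|x|^{-1}$ (weak $L^2$ in the plane), obtaining $\|v(t)\|_{L^\eta}\le C\|\bar{\omega}(t)\|_{L^p}\le C\,t^{-(1/2-1/\eta)}$, which is exactly (\ref{vass}) with $\alpha=0$. For the endpoint $\eta=\infty$ and for the first derivative I would instead use the explicit formula directly: estimating $\int_0^r s\,\bar{\omega}(s,t)\,ds$ by $\tfrac{r^2}{2}\|\bar{\omega}(t)\|_{L^\infty}$ for small $r$ and by $\tfrac{1}{2\pi}\|\bar{\omega}(t)\|_{L^1}$ for large $r$ and optimizing over the crossover radius gives $\|v(t)\|_{L^\infty}\le C\|\bar{\omega}(t)\|_{L^1}^{1/2}\|\bar{\omega}(t)\|_{L^\infty}^{1/2}\le C\,t^{-1/2}$; writing $g(r,t)=r^{-1}\int_0^r s\,\bar{\omega}(s,t)\,ds$, so that $\partial_r g = \bar{\omega} - g/r$, and bounding each piece by $\|\bar{\omega}(t)\|_{L^\infty}\le C\,t^{-1}$ and the large-$r$ decay of $g/r$ yields $\|\nabla v(t)\|_{L^\infty}\le C\,t^{-1}$. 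These are the cases $\eta=\infty$, $\alpha\in\{0,1\}$ of (\ref{vass}); the delicate point is matching the small-$r$ and large-$r$ regimes of the radial integral against the correct heat-kernel rates of $\bar{\omega}$ and $\nabla\bar{\omega}$.

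With (\ref{vass}) established, Assumption (ii) of Theorem \ref{maintheorem} is literally the hypothesis $\|e^{\triangle t}u_0\|_2^2\le C(1+t)^{-\gamma}$, and Assumption (i) is the a priori energy estimate for (\ref{pde}) carried out in Subsection \ref{energysubsection}: testing (\ref{pde}) against $u$, incompressibility annihilates $u\cdot\nabla u$, $\nabla p$, and $v\cdot\nabla u$, the remaining forcing $\int (u\cdot\nabla v)\cdot u$ is controlled after one integration by parts using the decay of $v$ from (\ref{vass}), and Gronwall's inequality produces $\sup_{t_0\le t\le T}\|u(t)\|_{L^2(\mathbb{R}^2)}\le C_{t_0}(1+T)^{1/2}$. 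Theorem \ref{maintheorem} then gives $\|u(t)\|_{L^2(\mathbb{R}^2)}^2\le\tilde{C}_{t_0}(1+t)^{-\gamma}$ for $t>t_0$. Finally, on $(0,t_0]$ the solution starts from $u_0\in L^2(\mathbb{R}^2)$ and is bounded in $L^2$ by the local theory, and since $(1+t)^{-\gamma}$ is bounded below there, this piece is absorbed into the constant; choosing $t_0$ small and enlarging $C$ accordingly yields $\|w(t)-v(t)\|_{L^2(\mathbb{R}^2)}^2 = \|u(t)\|_{L^2(\mathbb{R}^2)}^2\le C(1+t)^{-\gamma}$ for all $t$, as claimed.
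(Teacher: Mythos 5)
Your proposal is correct and follows essentially the same route as the paper: reduce to Theorem \ref{maintheorem} by noting that radial vorticity makes the transport term vanish (so $v=K\ast e^{\triangle t}\bar{\omega}_0$ solves (\ref{NS}) and $u=w-v$ solves (\ref{pde})), then verify (\ref{vass}) from the heat-semigroup decay of $\bar{\omega}$ together with Biot--Savart bounds, with Assumption (i.) supplied by the a priori energy estimate of Subsection \ref{energysubsection}. The only cosmetic differences are that you derive the $L^\infty$ interpolation bound directly from the radial formula (and finite $\eta$ via Hardy--Littlewood--Sobolev) where the paper cites Gallay and Wayne's Lemma 2.1, and you additionally verify the $\alpha=1$, $\eta=\infty$ case that the paper's decay argument deliberately sidesteps by reusing the $\alpha=0$ estimate.
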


\begin{remark}\label{remLp}
Using a far field calculation it can be shown that if $\nabla\times u_0$ has compact support then $u_0\in L^p(\mathbb{R}^2)$ for any $p\in (1,2]$ and $\gamma$ can be chosen to be any value in $[0,1)$.  This is demonstrated in Subsection \ref{vortex-sheet-bound}.
\end{remark}

\begin{remark}
For $\omega_0 \in L^1 (\mathbb{R}^2) $, Gallay and Wayne \cite{MR1912106}, \cite{MR2123378} have described the asymptotic behavior of solutions to the vorticity equation (\ref{eqn:vorticity}). In particular they prove

\begin{displaymath}
\lim _{t \to \infty} t^{\frac{1}{2} - \frac{1}{q}} \Vert w(t) - \frac{\alpha}{\sqrt{t}} V \left( \frac{\cdot}{\sqrt{t}} \right)\Vert _{L^q (\mathbb{R}^2)} = 0, \qquad 2 < q \leq \infty
\end{displaymath}
where $\alpha = \int _{\mathbb{R}^2} \bar{\omega} _0 \, dx$ and $V(\xi) = \frac{1}{2 \pi} \frac{\xi ^{\bot}}{|\xi|^2} \left( 1 - e^{-|\xi|^2/4}\right)$.  Our Corollary concerns the borderline case $q = 2$, but we show how the solution approaches a radial solution instead of the Oseen vortex
\begin{displaymath}
O(\xi, t) = \frac{\alpha}{\sqrt{t}} V \left( \frac{\xi}{\sqrt{t}} \right) = \frac{\alpha}{2 \pi} \frac{\xi ^{\bot}}{|\xi|^2} \left( 1 - e^{-|\xi|^2/4t}\right). 
\end{displaymath}
Note that the Oseen vortex is a solution to the Navier-Stokes equations (\ref{NS}) with initial data $w_0 (\xi) = \frac{1}{2 \pi} \frac{\xi ^{\bot}}{|\xi|^2}$, which is not in $L^2_{loc}(\mathbb{R}^2)$, but is in $\dot{B} ^{2/r -1} _{r, \infty} (\mathbb{R}^2)$ because is a homogeneous distribution of degree $-1$ (see, Cannone \cite[Lemma 3.3.2]{MR1688096}).
\end{remark}

\vspace{.25cm}

This articles is organized as follows. In the next Section we establish some basic properties of solutions to (\ref{pde}), including the a priori energy estimate.  In Section \ref{decaysection} we use the Fourier Splitting Method to prove Theorem \ref{maintheorem}.

\section{Preliminaries}


\subsection{A Priori Energy Estimate} \label{energysubsection}

We now establish an a priori energy estimate for solutions of (\ref{pde}) when $v$ satisfies (\ref{vass}) with $\alpha = 0$ and $\eta = \infty$.  This estimate is known in the literature but we record it here for completeness since it is one of the assumptions for Theorem \ref{maintheorem}.  It is straightforward to make it precise in the case of the radial energy decomposition mentioned in the Introduction (see \cite{MR1891170} for a rigorous argument in their setting). Formally, multiplying (\ref{pde}) by $u$ and then integrating by parts yields
\begin{align}
\frac{1}{2}\frac{d}{dt}\|u\|_{L^2(\mathbb{R}^2)}^2 + \|\nabla u\|_{L^2(\mathbb{R}^2)}^2 \leq |<u\cdot\nabla v,u>|\label{energystep1}
\end{align}
where we have introduced the notation $<u\cdot\nabla v,u>=\sum_i\int u\cdot\nabla v_i u_i\,dx$.  Fix $t_0>0$.  After integrating by parts and using H\"older's inequality, then (\ref{vass}) with $\alpha = 0$ and $\eta = \infty$ and then Cauchy's inequality,  we have for any $t>t_0$,
\begin{align}
|<u(t)\cdot\nabla v(t),u(t)>|&=|<u(t)\cdot\nabla u(t),v(t)>|\notag\\
&\leq C\|u(t)\|_{L^2(\mathbb{R}^2)}^2(1+t)^{-1} +\frac{1}{2}\|\nabla u(t)\|_{L^2(\mathbb{R}^2)}^2.\notag
\end{align}
In the above line the constant may depend on $t_0$.  Combining this estimate with (\ref{energystep1}) and then integrating from $t_0$ to $t$ yields
\begin{align}
\|u(t)\|_{L^2(\mathbb{R}^2)}^2 \leq C\int_{t_0}^t\|u(s)\|_{L^2(\mathbb{R}^2)}^2(1+s)^{-1}\,ds +\|u(t_0)\|_{L^2(\mathbb{R}^2)}^2.\notag
\end{align}
From here a Gronwall inequality gives
\begin{align}
\|u(t)\|_{L^2(\mathbb{R}^2)}^2 \leq C\|u(t_0)\|_{L^2(\mathbb{R}^2)}^2(1+t)\notag
\end{align}
which is (\ref{initialenergy}) of assumption (i.) in Theorem \ref{maintheorem}.

\subsection{Properties of solutions with the radial energy decomposition} \label{vortex-sheet-bound}
In this subsection we consider the Navier-Stokes equation with initial data $w_0\in L^2_{loc}(\mathbb{R}^2)$ such that $\omega_0 = \nabla \times w_0 \in L^1(\mathbb{R}^2)$.  As in the Introduction, consider the radial energy decomposition $w_0=u_0+v_0$ where $u_0\in L^2(\mathbb{R}^2)$ and $v_0$ is the velocity of the radial vorticity $\bar{\omega}_0$.  We first we prove our claim that $v=K\ast e^{\triangle t}\bar{\omega}_0$ satisfies the estimate (\ref{vass}) with $\alpha = 0$ and $\eta = \infty$. As $\bar{\omega} _0 \in L^1 (\mathbb{R}^2)$ we have by direct calculation  
\begin{displaymath}
\Vert e^{\triangle t}\bar{\omega}_0 \Vert _{L^p (\mathbb{R}^2)} \leq C t ^{- (1 - \frac{1}{p})}, \quad 1 \leq p \leq \infty.
\end{displaymath} 
To find the estimate on $v(t)$, the corresponding solution to the Navier-Stokes equations, we recall the following estimate on the Biot-Savart Kernel.
\begin{Lemma}
Let $\bar{\omega}_0\in L^p (\mathbb{R}^2) \cap L^q(\mathbb{R}^2)$ for $1 \leq p < 2 < q \leq \infty$ and let $0 < \alpha < 1$ be such that $\frac{1}{2} = \frac{\alpha}{p} + \frac{1 - \alpha}{q}$.  For $v=K\ast\bar{\omega}_0$ we have
\begin{displaymath}
\Vert v(t) \Vert _{L^{\infty}(\mathbb{R}^2)} \leq C \Vert \bar{\omega}_0 \Vert  _{L^p (\mathbb{R}^2)} ^{\alpha} \Vert \bar{\omega}_0 \Vert _{L^q (\mathbb{R}^2)} ^{1 - \alpha}.
\end{displaymath} 
\end{Lemma}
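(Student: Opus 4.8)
The plan is to estimate $v$ pointwise using only the size bound $|K(z)|\le \frac{1}{2\pi}|z|^{-1}$ on the Biot-Savart kernel, so that for every $x\in\mathbb{R}^2$,
\[
|v(x)| \le \frac{1}{2\pi}\int_{\mathbb{R}^2}\frac{|\bar{\omega}_0(y)|}{|x-y|}\,dy.
\]
The kernel $|z|^{-1}$ does not lie in any single $L^{s'}(\mathbb{R}^2)$, but it is locally integrable near its singularity against high-exponent functions and integrable away from it against low-exponent functions. Accordingly, I would split the region of integration, for a radius $R>0$ to be chosen, into the near part $B(x,R)$ and the far part $\mathbb{R}^2\setminus B(x,R)$, and estimate the two pieces by H\"older's inequality against the $L^q$ and $L^p$ norms respectively.

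For the near piece I would apply H\"older with conjugate exponent $q'=q/(q-1)$: since $q>2$ forces $q'<2$, the integral $\int_{B(x,R)}|x-y|^{-q'}\,dy = 2\pi\int_0^R r^{1-q'}\,dr$ converges and equals a constant times $R^{2-q'}$, yielding a bound of the form $C\,R^{(q-2)/q}\,\|\bar{\omega}_0\|_{L^q(\mathbb{R}^2)}$. For the far piece I would apply H\"older with $p'=p/(p-1)$: since $p<2$ forces $p'>2$, the integral $\int_{\mathbb{R}^2\setminus B(x,R)}|x-y|^{-p'}\,dy = 2\pi\int_R^\infty r^{1-p'}\,dr$ converges and equals a constant times $R^{2-p'}$, giving $C\,R^{(p-2)/p}\,\|\bar{\omega}_0\|_{L^p(\mathbb{R}^2)}$. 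Combining the two,
\[
|v(x)| \le C\big(R^{(q-2)/q}\|\bar{\omega}_0\|_{L^q(\mathbb{R}^2)} + R^{(p-2)/p}\|\bar{\omega}_0\|_{L^p(\mathbb{R}^2)}\big),
\]
with a constant independent of $x$ and $R$; the hypotheses $p<2<q$ are precisely what make both pieces finite.

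Finally I would optimize in $R$. Since $(q-2)/q>0>(p-2)/p$, the first term increases and the second decreases in $R$, so optimization amounts to balancing them: setting the two equal gives $R^{2(1/p-1/q)} = \|\bar{\omega}_0\|_{L^p}/\|\bar{\omega}_0\|_{L^q}$ (solvable for $R>0$ because $1/p>1/q$), and substituting back produces a bound proportional to $\|\bar{\omega}_0\|_{L^p(\mathbb{R}^2)}^{\alpha}\|\bar{\omega}_0\|_{L^q(\mathbb{R}^2)}^{1-\alpha}$. I expect no genuine obstacle; the only point requiring care is the bookkeeping of the conjugate exponents and the verification that the balancing exponent reproduces the stated $\alpha$. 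A short computation shows the resulting exponent on $\|\bar{\omega}_0\|_{L^p}$ equals $\frac{(q-2)/q}{2(1/p-1/q)} = \frac{1/2-1/q}{1/p-1/q}$, which is exactly the $\alpha$ determined by $\frac{1}{2}=\frac{\alpha}{p}+\frac{1-\alpha}{q}$; this match is in any case forced by the dilation scaling $\bar{\omega}_0(\cdot)\mapsto\bar{\omega}_0(\cdot/\mu)$, under which both sides of the asserted inequality transform identically precisely when the interpolation relation holds. Since the right-hand side is independent of $x$, taking the supremum over $x$ delivers the claimed $L^\infty$ bound.
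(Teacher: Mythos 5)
Your proof is correct, but it is worth noting that the paper contains no argument of its own for this lemma: its ``proof'' is a citation to Lemma 2.1 of Gallay and Wayne \cite{MR1912106}. Your near/far decomposition is thus a self-contained derivation of what the paper imports from the literature, and it is in fact the classical potential-theoretic argument underlying the cited result. The bookkeeping checks out: Hölder on $B(x,R)$ gives $C R^{(2-q')/q'}\|\bar{\omega}_0\|_{L^q}$ with $(2-q')/q' = 1-2/q = (q-2)/q$, convergent precisely because $q>2$ forces $q'<2$; Hölder on the complement gives the exponent $1-2/p=(p-2)/p<0$, convergent precisely because $p<2$ forces $p'>2$; and balancing the two terms yields the exponent $\alpha = \frac{1/2-1/q}{1/p-1/q}$ on $\|\bar{\omega}_0\|_{L^p}$, which is exactly the relation $\frac{1}{2}=\frac{\alpha}{p}+\frac{1-\alpha}{q}$, with the complementary exponent $1-\alpha$ on $\|\bar{\omega}_0\|_{L^q}$ following automatically from degree-one homogeneity in $\bar{\omega}_0$. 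Your scaling remark correctly explains why no other exponents could appear. Two trivial points to tidy: at the endpoints $p=1$ and $q=\infty$ permitted by the hypotheses, the conjugate exponents $p'=\infty$ and $q'=1$ make your displayed integrals degenerate notationally, but the limiting Hölder bounds $\|\,|z|^{-1}\|_{L^\infty(|z|>R)}=R^{-1}$ and $\int_{B(x,R)}|z|^{-1}\,dz=2\pi R$ reproduce the same exponents, so the argument goes through verbatim; and the balancing step implicitly divides by $\|\bar{\omega}_0\|_{L^q}$, so the trivial case $\bar{\omega}_0\equiv 0$ should be set aside at the outset.
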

\begin{proof}
See \cite[Lemma 2.1]{MR1912106}.
\end{proof}
Combining the previous lemma with the above bound on $e^{\triangle t}\bar{\omega}_0$ we establish (\ref{vass}). As mentioned in the Introduction, if we further assume that $\tilde{\omega}_0 = \nabla\times u_0$ has compact support $B_R$ we can use a far field calculation to demonstrate $\|e^{\triangle t}u_0\|_2^2\leq C(1+t)^{-\gamma}$ for every $\gamma\in [0,1)$.  Indeed, if $y<R$ and $x>4R$ then the following geometric series converges:
\begin{align}
 \frac{1}{|x-y|}=\frac{1}{|x|^2}\sum_{k=0}^\infty\left( \frac{|y|^2}{|x|^2}-\frac{2x\cdot y}{|x|^2}\right)^k.\notag
\end{align}
Using $\int_{\mathbb{R}^2}\nabla\times u_0\,dx=0$ we find that for large $x$ 
\begin{align}
 u_0(x)&=\frac{1}{2\pi}\left(-\frac{1}{|x|^2}\int_{\mathbb{R}^2}y^\perp\tilde{\omega}_0(y)\,dy-\frac{x^\perp}{|x|^4}x\cdot \int_{\mathbb{R}^2}y\tilde{\omega}_0(y)\,dy+ O(|x|^{-3})\right)\notag
\end{align}
which implies that $u_0\in L^p(\mathbb{R}^2)$ for every $p\in (1,2]$.  For $q$ such that $\frac{1}{p}+\frac{1}{q}=\frac{3}{2}$ we bound
\begin{align}
\|e^{\triangle t}u_0\|_2 \leq \|\Phi (t)\|_{L^q(\mathbb{R}^2)}\|u_0\|_{L^q(\mathbb{R}^2)}\notag
\end{align}
where $\Phi (t)$ is the $2D$ heat kernel.  As  $\|\Phi (t)\|_{L^q(\mathbb{R}^2)} \leq Ct^{\frac{1}{q}-1}$ it must be that $\|e^{\triangle t}u_0\|_2\leq Ct^{\frac{1}{2}-\frac{1}{p}}$.  Since $p\in (1,2]$ we have $\|e^{\triangle t}u_0\|_2^2\leq C(1+t)^{-\gamma}$ for every $\gamma\in [0,1)$.

\section{Decay} \label{decaysection}

In this section we prove Theorem \ref{maintheorem} using the Fourier Splitting Method of M. E. Schonbek. In our proof we also incorporate a Gronwall-type trick used by Zhang \cite{MR1312702}. Here we proceed formally but note the argument can be made rigorous using an approximating sequence of solutions.  This would be argued similar to the proof of the energy inequality (\ref{initialenergy}) in \cite{MR1891170} or similar to \cite{MR775190} in the more classical radial energy decomposition case.  We start with frequency bounds. Applying Duhamel's formula in Fourier space and a simple integral inequality to (\ref{pde}) yields
\begin{align}
|\hat{u}|\leq e^{-|\xi|^2t}|\hat{u}_0|+\int_0^te^{-|\xi|^2(t-s)}|\xi|\left(|\widehat{v\otimes u}|+|\widehat{u\otimes u}| +|\widehat{u\otimes v}|+|\hat{p}|\right)\,ds.\label{feq}
\end{align}
Taking divergence of (\ref{pde}) and then using the symmetry of the tensor we find that $|\hat{p}|\leq 2|\widehat{v\otimes u}|+|\widehat{u\otimes u}|$, so we obtain the bound
\begin{align}
|\hat{u}|\leq e^{-|\xi|^2t}|\hat{u}_0|+2\int_0^te^{-|\xi|^2(t-s)}|\xi|\left(|\widehat{v\otimes u}|+|\widehat{u\otimes u}|\right)\,ds.\notag
\end{align}
Using now H\"older's inequality with the estimate (\ref{vass}) ($\eta=\infty$ and $\alpha=1$) gives
\begin{align}
|<u\cdot\nabla v,u>|\leq Ct^{-1}\|u(t)\|_{L^2(\mathbb{R}^2)}^2\notag
\end{align}
so that after multiplying the PDE by $u$ and integrating by parts we have
\begin{align}
\frac{1}{2}\frac{d}{dt}\|u\|_{L^2(\mathbb{R}^2)}^2+\|\nabla u\|_{L^2(\mathbb{R}^2)}^2 \leq Ct^{-1}\|u\|_{L^2(\mathbb{R}^2)}^2.\notag
\end{align}
We fix $t_0>0$ and now consider the inequality for $t>t_0>0$ so that $t^{-1}<(1+t_0^{-1})(1+t)^{-1}$ and 
\begin{align}
Ct^{-1}\|u\|_{L^2(\mathbb{R}^2)}^2 \leq C_0(t+1)^{-1}\|u\|_{L^2(\mathbb{R}^2)}^2\notag,
\end{align}
where $C_0$ contains the term $(1+t_0^{-1})$.  We now apply a Fourier Splitting argument around a ball with radius $r (t)>0$, where $r(t)$ is to be determined later.  After observing that
\begin{align}
r^2\|u\|^2_{L^2(\mathbb{R}^2)}-r^2\int_{B(r)}|\hat{u}(s)|^2\,d\xi \leq \|\nabla u\|_{L^2(\mathbb{R}^2)}\notag
\end{align}
we find that
\begin{align}
\frac{1}{2}\frac{d}{dt}\|u\|_{L^2(\mathbb{R}^2)}^2+ (r^2-C_0(t+1)^{-1})\|u\|_{L^2(\mathbb{R}^2)}^2\leq r^2\int_{B(r)}|\hat{u}(s)|^2\,d\xi\label{energysplit}
\end{align}
for $t>t_0$. 

In the case where (\ref{vass}) does not hold for $\eta=\infty, \alpha=1$ we can instead use (\ref{vass})  with $\eta=\infty,  \alpha=0$ as mentioned in the Introduction. After integration by parts and using Cauchy's inequality we obtain the bound
\begin{align}
|<u\cdot\nabla v,u>|=|<u\cdot\nabla u,v>|&\leq \|u\|_{L^2(\mathbb{R}^2)}\|\nabla u\|_{L^2(\mathbb{R}^2)}\|v\|_{L^\infty(\mathbb{R}^2)}\notag\\
&\leq C(1+t)^{-1}\|u(t)\|_{L^2(\mathbb{R}^2)}^2 +\frac{1}{2}\|\nabla u\|_{L^2(\mathbb{R}^2)}^2.\notag
\end{align}
Considering again a fixed $t_0>0$  we again arrive at (\ref{energysplit}) but with different constants which will have no impact on the following arguments.  Thus, we can say that Theorem \ref{maintheorem} holds for these two estimates on $v$, which is what we use as hypotheses in our Corollaries \ref{corollary-besov} and \ref{corollary-vortexsheet}.

Now we estimate the right hand side of (\ref{energysplit}):
\begin{align}
\int_{B(r)}|\hat{u}(s)|^2\,d\xi&\leq \int_{B(r)}e^{-2|\xi|^2t}|\hat{u}_0|^2\,d\xi\notag\\
&\ \ \ \ \ \ \ \ + \int_{B(r)}\left(\int_0^te^{-|\xi|^2(t-s)}|\xi|(|\widehat{v\otimes u}|+|\widehat{u\otimes u}|)\,ds\right)^2\,d\xi\notag\\
&:= I(t)+B.\notag
\end{align}
We need to break $B$ into two pieces, one with $|u\otimes u|$ and the other with $|u\otimes v|$.  This is done with Minkowski's inequality then the triangle inequality by
\begin{align}
B&\leq r^2\int_{B(r)}\left(\int_0^te^{-|\xi|^2(t-s)}(|\widehat{v\otimes u}|+|\widehat{u\otimes u}|)\,ds\right)^2\,d\xi\notag\\
&\leq r^2\left(\int_0^t\left(\int_{B(r)}e^{-2|\xi|^2(t-s)}(|\widehat{v\otimes u}|+|\widehat{u\otimes u}|)^2\,d\xi\right)^{\frac{1}{2}}\,ds\right)^2\notag\\
&\leq r^2\left(\int_0^t\left(\int_{B(r)}e^{-2|\xi|^2(t-s)}(|\widehat{v\otimes u}|)^2\,d\xi\right)^{\frac{1}{2}}\,ds + \int_0^t\left(\int_{B(r)}e^{-2|\xi|^2(t-s)}(|\widehat{u\otimes u}|)^2\,d\xi\right)^{\frac{1}{2}}\,ds\right)^2.\notag
\end{align}
Using H\"older's inequality, then the decay assumption on $v$ (here $\eta\neq\infty$)
\begin{align}
\left(\int_{B(r)}e^{-2|\xi|^2(t-s)}(|\widehat{v\otimes u}|)^2\,d\xi\right)^{\frac{1}{2}}&\leq\left(\int_{B(r)}e^{-p|\xi|^2(t-s)}\,d\xi\right)^{\frac{1}{p}}\|\widehat{v\otimes u}\|_{L^q(\mathbb{R}^2)}\notag\\
&\leq C(t-s)^{-\frac{1}{p}}\|u(s)\|_{L^2(\mathbb{R}^2)}\|v(s)\|_{L^p(\mathbb{R}^2)}\notag\\
&\leq C(t-s)^{-\frac{1}{p}}\|u(s)\|_{L^2(\mathbb{R}^2)}s^{-(\frac{1}{2}-\frac{1}{p})}\notag
\end{align}
where in the above sequence $\frac{1}{2}=\frac{1}{q}+\frac{1}{p}$.  Also,
\begin{align}
\left(\int_{B(r)}e^{-2|\xi|^2(t-s)}(|\widehat{u\otimes u}|)^2\,d\xi\right)^{\frac{1}{2}} 
&\leq C|r|\|\widehat{u\otimes u}\|_{L^\infty(\mathbb{R}^2)}\notag\\
&\leq C|r|\|u(s)\|_{L^2(\mathbb{R}^2)}^2\notag
\end{align}
so that 
\begin{align}
B&\leq Cr^2\left(\int_0^t(t-s)^{-\frac{1}{p}}\|u(s)\|_{L^2(\mathbb{R}^2)}s^{-(\frac{1}{2}-\frac{1}{p})}\,ds\right)^2\notag\\
&\ \ \ \ \ \ \ \ +Cr^4\left(\int_0^t\|u(s)\|^2_{L^2(\mathbb{R}^2)}\,ds\right)^2.\notag
\end{align}
Then (\ref{energysplit}) becomes
\begin{align}
\frac{1}{2}\frac{d}{dt}\|u\|_{L^2(\mathbb{R}^2)}^2+ &(r^2-C_0(t+1)^{-1})\|u\|_{L^2(\mathbb{R}^2)}^2\notag\\
&\leq r^2I(t) +Cr^4\left(\int_0^t(t-s)^{-\frac{1}{p}}\|u(s)\|_{L^2(\mathbb{R}^2)}s^{-(\frac{1}{2}-\frac{1}{p})}\,ds\right)^2\notag\\
&\ \ \ \ \ \ \ \ +Cr^6\left(\int_0^t\|u(s)\|^2_{L^2(\mathbb{R}^2)}\,ds\right)^2.\notag
\end{align}
Choose $r^2(t)=\frac{1+C_0}{(t+1)}$ and multiply everything by $2(t+1)^2$ to find
\begin{align}
\frac{d}{dt}\left((1+t)^2\|u\|_{L^2(\mathbb{R}^2)}^2\right)&\leq C(t+1)I(t)\notag\\
&\ \ \ \ +C\left(\int_0^t(t-s)^{-\frac{1}{p}}\|u(s)\|_{L^2(\mathbb{R}^2)}s^{-(\frac{1}{2}-\frac{1}{p})}\,ds\right)^2\notag\\
&\ \ \ \ \ \ \ \ +C(1+t)^{-1}\left(\int_0^t\|u(s)\|^2_{L^2(\mathbb{R}^2)}\,ds\right)^2.\notag
\end{align}
By assumption we have
\begin{align}
I(t)\leq C(1+t)^{-\gamma}\label{gammaass}
\end{align}
for some $\gamma\in [0,1]$.  The next step is to integrate from $t_0$ to $\rho$ and divide by $(1+\rho)^{2-\gamma}$, which leads to
\begin{align}
(1+\rho)^{\gamma}&\|u(\rho)\|_{L^2(\mathbb{R}^2)}^2\notag\\
&\leq \frac{(1+t_0)^2}{(1+\rho)^{2-\gamma}}\|u(t_0)\|_{L^2(\mathbb{R}^2)}^2+\frac{C}{(1+\rho)^{2-\gamma}}\int_0^\rho(t+1)I(t)\,dt+A_1+A_2,\notag\\
A_1&=\frac{C}{(1+\rho)^{2-\gamma}}\int_0^\rho\left(\int_0^t(t-s)^{-\frac{1}{p}}\|u(s)\|_{L^2(\mathbb{R}^2)}s^{-(\frac{1}{2}-\frac{1}{p})}\,ds\right)^2\,dt.\notag\\
A_2&=\frac{C}{(1+\rho)^{2-\gamma}}\int_0^\rho(1+t)^{-1}\left(\int_0^t\|u(s)\|^2_{L^2(\mathbb{R}^2)}\,ds\right)^2\,dt.\notag
\end{align}
The main goal now is to set it up as a Gronwall inequality for $g(\rho)=(1+\rho)^\gamma\|u(\rho)\|_2^2$. For the $A_1$ term we have
\begin{align}
\int_0^t&(t-s)^{-\frac{1}{p}}\|u(s)\|_{L^2(\mathbb{R}^2)}s^{-(\frac{1}{2}-\frac{1}{p})}\,ds\notag\\
&\leq \left(\int_0^t(t-s)^{-\frac{2}{p}}s^{-(1-\frac{2}{p})}(1+s)^{-1}\,ds\right)^{\frac{1}{2}}\left(\int_0^t(1+s)\|u(s)\|^2_{L^2(\mathbb{R}^2)}\,ds\right)^{\frac{1}{2}}\notag\\
&\leq C\left(\int_0^\rho(1+s)\|u(s)\|^2_{L^2(\mathbb{R}^2)}\,ds\right)^{\frac{1}{2}}.\notag
\end{align}
Here we used $\int_0^t(t-s)^{-\frac{1}{p}}s^{-(\frac{1}{2}-\frac{1}{p})}\,ds<C$ for all $t>0$ when $p>2$.
Then,
\begin{align}
A_1&\leq \frac{C}{(1+\rho)^{2-\gamma}}\left(\int_0^\rho\,dt\right)\left(\int_0^\rho(1+s)\|u(s)\|^2_{L^2(\mathbb{R}^2)}\,ds\right)\notag\\
&\leq C\int_0^\rho(1+s)\|u(s)\|^2_{L^2(\mathbb{R}^2)}\,ds.\notag
\end{align}
In moving to the last line we used the fact that $\gamma\leq 1$. The $A_2$ term is similar, as
\begin{align}
A_2&=\frac{C}{(1+\rho)^{2-\gamma}}\int_0^\rho(1+t)^{-1}\left(\int_0^t\|u(s)\|^2_{L^2(\mathbb{R}^2)}\,ds\right)^2\,dt\notag\\
&\leq\frac{C}{(1+\rho)^{2-\gamma}}\int_0^\rho(1+t)^{-1}\left(\int_0^t(1+s)^{-1}\|u(s)\|^2_{L^2(\mathbb{R}^2)}\,ds\right)\left(\int_0^t(1+s)\|u(s)\|^2_2\,ds\right)\,dt\notag\\
&\leq\frac{C}{(1+\rho)^{2-\gamma}}\left(\int_0^\rho(1+t)^{-1}\int_0^t(1+s)^{-1}\|u(s)\|^2_{L^2(\mathbb{R}^2)}\,ds\,dt\right)\left(\int_0^\rho(1+s)\|u(s)\|^2_2\,ds\right).\notag
\end{align}
Now, by the assumed bound (\ref{initialenergy}), $\|u(s)\|_{L^2(\mathbb{R}^2)}^2\leq C(1+s)$ so that
\begin{align}
A_2\leq C\int_0^\rho(1+s)\|u(s)\|^2_{L^2(\mathbb{R}^2)}\,ds.\notag
\end{align}
The term $\frac{1}{(1+\rho)^{2-\gamma}}\|u(t_0)\|_2^2$ is bounded by some constant.  Using the assumption on $I(t)$
\begin{align}
\frac{C}{(1+\rho)^{2-\gamma}}\int_0^\rho(t+1)I(t)\,dt\leq \frac{C}{(1+\rho)^{2-\gamma}}\int_0^\rho(t+1)^{1-\gamma}\,dt\leq C\notag
\end{align}
Putting everything together we have
\begin{align}
g(\rho)&\leq C+C\int_0^\rho g(s)\,ds\notag\\
g(\rho)&=(1+\rho)^\gamma\|u(\rho)\|_2^2,\notag
\end{align}
so Gronwall's inequality implies $g(\rho)\leq C$ or
\begin{align}
\|u(t)\|_2^2\leq C(1+t)^{-\gamma},\notag
\end{align}
This is exactly the conclusion in Theorem \ref{maintheorem}.

\bibliography{NSdecay}

\bibliographystyle{unsrt}

\end{document}